\newcommand{\sysn}{\left\{\begin{array}{rcl}}
\newcommand{\sysk}{\end{array}\right.}
\renewcommand{\ge}{\geqslant}
\newtheorem{theorem}{Theorem}[section]
\theoremstyle{example}
\theoremstyle{definition}
\journal{...}
\begin{document}

\begin{frontmatter}



\title{Selection principles and games in bitopological function spaces}


\author[label1]{Daniil Lyakhovets}

\ead[label1]{zoy01111@gmail.com}


\address[label1]{Krasovskii Institute of Mathematics and Mechanics, 620219, Yekaterinburg, Russia}

\author[label2]{Alexander~V.~Osipov}

\ead[label2]{OAB@list.ru}

\address[label2]{Krasovskii Institute of Mathematics and Mechanics, Ural Federal
 University, Ural State University of Economics, 620219, Yekaterinburg, Russia}

\begin{abstract}
For a Tychonoff space $X$, we denote by $(C(X), \tau_k, \tau_p)$
the bitopological space of all real-valued continuous functions on
$X$ where $\tau_k$ is the compact-open topology and $\tau_p$ is
the topology of pointwise convergence. In papers
\cite{kooz,kooz1,os5} variations of selective separability and
tightness in $(C(X), \tau_k, \tau_p)$ were investigated. In this
paper we continued to study the selective properties and the
corresponding topological games in the space $(C(X), \tau_k,
\tau_p)$.

\end{abstract}

\begin{keyword}
selection principles \sep compact-open topology \sep function
space \sep bitopological space \sep topological games \sep
separable space


\MSC[2010]   54C35  \sep 54D65 \sep 54E55  \sep 54A20 \sep 91A05
\sep 91A44

\end{keyword}

\end{frontmatter}



\section{Introduction}
\label{}

In papers \cite{bbm1,cmkm,mkm,os2,os4,os41,os3,sch} the authors
investigated the selectors of dense subsets of the space $C(X)$ of
all real-valued continuous functions on a Tychonoff space $X$ with
the topology $\tau_p$ of pointwise convergence and with the
compact-open topology $\tau_k$. For a Tychonoff space $X$, we
denote by $(C(X), \tau_k, \tau_p)$ the bitopological space. In
articles \cite{kooz,kooz1,os5}, variations of selective
separability and tightness in $(C(X), \tau_k, \tau_p)$ were
investigated. In this paper, we continued to study the selective
properties and the corresponding topological games in the space
$(C(X), \tau_k, \tau_p)$.  The following selection properties for
$(C(X), \tau_k, \tau_p)$ are considered.

\medskip
\begin{center}

$S_1(\mathcal{D}^k,\mathcal{S}^p) =
S_{fin}(\mathcal{D}^k,\mathcal{S}^p) \Rightarrow
S_1(\mathcal{D}^k,\mathcal{D}^p) \Rightarrow
S_{fin}(\mathcal{D}^k,\mathcal{D}^p)$

\end{center}

\bigskip

For example, a space $(C(X), \tau_k, \tau_p)$ satisfies
$S_1(\mathcal{D}^k,\mathcal{S}^p)$ (resp.,
$S_{fin}(\mathcal{D}^k,\mathcal{S}^p))$ if whenever $(D_n : n\in
\mathbb{N})$ is a sequence of dense subsets of $C_k(X)$, one can
take points $f_n\in D_n$ (resp., finite $F_n\subset D_n$) such
that $\{f_n : n\in \mathbb{N}\}$ (resp., $\bigcup \{F_n: n\in
\mathbb{N}\}$) is sequentially dense in $C_p(X)$. There is a
topological game, denoted by $G_{*}(\mathcal{A},\mathcal{B})$,
 corresponding to $S_{*}(\mathcal{A},\mathcal{B})$.

In this paper, we have gave characterizations for the
bitopological space $(C(X), \tau_k, \tau_p)$ to satisfy the
selection properties and the corresponding games.

\section{Main definitions and notation}

 Let $\mathcal{A}$ and $\mathcal{B}$ be sets consisting of
families of subsets of an infinite set $X$. Then many topological
properties are characterized in terms  of the following classical
selection principles:

$S_{1}(\mathcal{A},\mathcal{B})$ is the selection hypothesis: for
each sequence $(A_{n}: n\in \mathbb{N})$ of elements of
$\mathcal{A}$ there is a sequence $(b_{n} : n\in \mathbb{N})$ such
that for each $n$, $b_{n}\in A_{n}$, and $\{b_{n}: n\in\mathbb{N}
\}$ is an element of $\mathcal{B}$.

$S_{fin}(\mathcal{A},\mathcal{B})$ is the selection hypothesis:
for each sequence $(A_{n}: n\in \mathbb{N})$ of elements of
$\mathcal{A}$ there is a sequence $(B_{n}: n\in \mathbb{N})$ of
finite sets such that for each $n$, $B_{n}\subseteq A_{n}$, and
$\bigcup_{n\in\mathbb{N}}B_{n}\in\mathcal{B}$.

\medskip

The following prototype of many classical properties is called
"$\mathcal{A}$ choose $\mathcal{B}$" in \cite{tss}.

${\mathcal{A}\choose\mathcal{B}}$ : For each $\mathcal{U}\in
\mathcal{A}$ there exists $\mathcal{V}\subseteq \mathcal{U}$ such
that $\mathcal{V}\in \mathcal{B}$. In this paper we accept that
$|\mathcal{V}|=\aleph_0$.

Then $S_{fin}(\mathcal{A},\mathcal{B})$ implies
${\mathcal{A}\choose\mathcal{B}}$.

In this paper, by a cover we mean a nontrivial one, that is,
$\mathcal{U}$ is a cover of $X$ if $X=\bigcup \mathcal{U}$ and
$X\notin \mathcal{U}$.

An open cover $\mathcal{U}$ of a space $X$ is called:

$\bullet$  an $\omega$-cover (a $k$-cover) if each finite
(compact) subset $C$ of $X$ is contained in an element of
$\mathcal{U}$.

$\bullet$  a $\gamma$-cover (a $\gamma_k$-cover) if $\mathcal{U}$
is infinite and for each finite (compact) subset $C$ of $X$ the
set $\{U\in \mathcal{U} : C\nsubseteq U\}$ is finite.

For a topological space $X$ we denote:

$\bullet$ $\mathcal{O}$ --- the family of all open covers of $X$;

$\bullet$ $\Gamma$ --- the family of all open $\gamma$-covers of
$X$;

$\bullet$ $\Gamma_k$ --- the family of all open $\gamma_k$-covers
of $X$;

$\bullet$ $\Omega$ --- the family of all open $\omega$-covers of
$X$;

$\bullet$ $\mathcal{K}$ --- the family of all open $k$-covers of
$X$;

$\bullet$ $\mathcal{D}^k$ --- the family of all dense subsets of
$C_k(X)$;

$\bullet$ $\mathcal{D}^p$ --- the family of all dense subsets of
$C_p(X)$;

$\bullet$ $\mathcal{S}^k$ --- the family of all sequentially dense
subsets of $C_k(X)$;

$\bullet$ $\mathcal{S}^p$ --- the family of all sequentially dense
subsets of $C_p(X)$;

$\bullet$ $\mathbb{K}(X)$ --- the family of all non-empty compact
subsets of $X$;

$\bullet$ $\mathbb{F}(X)$ --- the family of all non-empty finite
subsets of $X$.

\bigskip
A space $X$ is said to be a $\gamma_k$-set if each $k$-cover
$\mathcal{U}$ of $X$ contains a countable set $\{U_n : n\in
\mathbb{N}\}$ which is a $\gamma_k$-cover of $X$ \cite{koc1}.

 If $X$ is a space and $A\subseteq X$, then the sequential closure of $A$,
 denoted by $[A]_{seq}$, is the set of all limits of sequences
 from $A$. A set $D\subseteq X$ is said to be sequentially dense
 if $X=[D]_{seq}$. A space $X$ is called sequentially separable if
 it has a countable sequentially dense set. Clearly, every sequentially separable space is
 separable.

\medskip



\medskip

Let $X$ be a topological space, and $x\in X$. A subset $A$ of $X$
{\it converges} to $x$, $x=\lim A$, if $A$ is infinite, $x\notin
A$, and for each neighborhood $U$ of $x$, $A\setminus U$ is
finite. Consider the following collection:

$\bullet$ $\Omega_x=\{A\subseteq X : x\in \overline{A}\setminus
A\}$;

$\bullet$ $\Gamma_x=\{A\subseteq X : x=\lim A\}$.

Note that if $A\in \Gamma_x$, then there exists $\{a_n\}\subset A$
converging to $x$. So, simply $\Gamma_x$ may be the set of
non-trivial convergent sequences to $x$.

\bigskip

We write $\Pi (\mathcal{A}_x, \mathcal{B}_x)$ without specifying
$x$, we mean $(\forall x) \Pi (\mathcal{A}_x, \mathcal{B}_x)$.

 So we have three types of topological properties of $(C(X), \tau_k, \tau_p)$
described through the selection principles of $X$ where the index
$k$ means the compact-open topology and the index $p$ - the
topology of pointwise convergence:

$\bullet$  local properties of the form $S_*(\Phi^k_x,\Psi^p_x)$;

$\bullet$  global properties of the form $S_*(\Phi^k,\Psi^p)$;

$\bullet$  semi-local properties of the form
$S_*(\Phi^k,\Psi^p_x)$.

\medskip

 There is a game, denoted by $G_{fin}(\mathcal{A},\mathcal{B})$,
 corresponding to $S_{fin}(\mathcal{A},\mathcal{B})$; two players,
 ONE and TWO, play a round for each natural number $n$. In the
 $n$-th round ONE chooses a set $A_n\in \mathcal{A}$ and TWO
 responds with a finite subset $B_n$ of $A_n$. A play
 $A_1,B_1;...;A_n,B_n;...$ is won by TWO if $\bigcup\limits_{n\in
 \mathbb{N}} B_n\in \mathcal{B}$; otherwise, ONE wins.

A strategy of a player is a function $\sigma$ from the set of all
finite sequences of moves of the opponent into the set of (legal)
moves of the strategy owner.

If ONE does not have a winning strategy in the game
$G_{*}(\mathcal{A},\mathcal{B})$, then the selection hypothesis
$S_{*}(\mathcal{A},\mathcal{B})$ is true; it is easy to prove. The
converse implication is not always true.

Similarly, one defines the game $G_{1}(\mathcal{A},\mathcal{B})$,
associated with $S_{1}(\mathcal{A},\mathcal{B})$.

So we have three types of topological games on $(C(X), \tau_k,
\tau_p)$ described through the selection principles (or
topological games) of $X$:

$\bullet$  local games of the form $G_*(\Phi^k_x,\Psi^p_x)$;

$\bullet$  global games of the form $G_*(\Phi^k,\Psi^p)$;

$\bullet$  semi-local games of the form $G_*(\Phi^k,\Psi^p_x)$.

The symbol ${\bf 0}$ denotes the constantly zero function in
$C(X)$.

\section{$S_{1}(\mathcal{D}^k,\mathcal{D}^p)$ and $G_{1}(\mathcal{D}^k,\mathcal{D}^p)$}

\begin{theorem}(Theorem 3.7 in \cite{os5} for $\lambda=k$ and $\mu=p$) For a space $X$ the following are
equivalent:

\begin{enumerate}

\item  $(C(X), \tau_k, \tau_p)$ has the property
$S_{1}(\Omega^{k}_{\bf 0},\Omega^{p}_{\bf 0})$;

\item  $X$ has the property $S_{1}(\mathcal{K},\Omega)$.

\end{enumerate}

\end{theorem}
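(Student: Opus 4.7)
The plan is to exploit the standard duality between $k$-covers of $X$ and $\tau_k$-neighborhoods of $\mathbf{0}$ in $C(X)$, upgraded so that a $\tau_p$-neighborhood of $\mathbf{0}$ is read off as an $\omega$-cover of $X$. This parallels the argument of Theorem~3.7 in~\cite{os5}, but transferred to the bitopological setting at the point $\mathbf{0}$.

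\smallskip

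\noindent$(2)\Rightarrow(1).$ Suppose $X$ has $S_1(\mathcal{K},\Omega)$ and let $(A_n)$ be a sequence in $\Omega^k_{\mathbf{0}}$. To each $f\in A_n$ I would associate the open set $U(f,n)=\{x\in X:|f(x)|<1/n\}$. Since $\mathbf{0}\in\overline{A_n}^{\tau_k}$, for every compact $K\subseteq X$ some $f\in A_n$ satisfies $K\subseteq U(f,n)$; after discarding any $f$ for which $U(f,n)=X$, the family $\mathcal{U}_n:=\{U(f,n):f\in A_n\}$ is an open $k$-cover of $X$. Applying $S_1(\mathcal{K},\Omega)$ produces $f_n\in A_n$ such that $\{U(f_n,n):n\in\mathbb{N}\}$ is an $\omega$-cover of $X$. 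For any finite $F\subseteq X$ and $\varepsilon>0$, choose $n>1/\varepsilon$ with $F\subseteq U(f_n,n)$; then $|f_n(x)|<1/n<\varepsilon$ for every $x\in F$, witnessing $\mathbf{0}\in\overline{\{f_n:n\in\mathbb{N}\}}^{\tau_p}$.

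\smallskip

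\noindent$(1)\Rightarrow(2).$ Conversely, given a sequence $(\mathcal{U}_n)$ of open $k$-covers of $X$, let $A_n$ denote the set of all continuous $f:X\to[0,1]$, not identically zero, for which there exists $U\in\mathcal{U}_n$ with $f|_{X\setminus U}\equiv 1$. I would check $A_n\in\Omega^k_{\mathbf{0}}$ as follows: given compact $K\subseteq X$ and $\varepsilon>0$, pick $U\in\mathcal{U}_n$ with $K\subseteq U$; Tychonoff separation applied to the disjoint closed sets $K$ and $X\setminus U$ (with $K$ compact) yields a continuous $f\in A_n$ with $f|_K\equiv 0$ and $f|_{X\setminus U}\equiv 1$, so that $|f|<\varepsilon$ on $K$. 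Hypothesis~(1) then supplies $f_n\in A_n$ with $\mathbf{0}\in\overline{\{f_n:n\in\mathbb{N}\}}^{\tau_p}$. For each $n$, fix $U_n\in\mathcal{U}_n$ witnessing $f_n\in A_n$. Then $\{U_n:n\in\mathbb{N}\}$ is an $\omega$-cover of $X$: given finite $F\subseteq X$, $\tau_p$-convergence to $\mathbf{0}$ at threshold $1/2$ gives some $n$ with $|f_n|<1/2$ on $F$, and since $f_n\equiv 1$ on $X\setminus U_n$ this forces $F\subseteq U_n$.

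\smallskip

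\noindent\textbf{Main obstacle.} There is no conceptually hard step; once the duality $f\leftrightarrow U(f,n)$ is set up, both implications reduce to bookkeeping. The only minor technicalities are (a) respecting the convention $X\notin\mathcal{U}$ for a nontrivial cover when forming $\mathcal{U}_n$ from $A_n$, and (b) ensuring the Urysohn functions produced via Tychonoff separation actually lie in $A_n$, which is automatic unless $\mathcal{U}_n=\{X\}$, a case excluded by the $k$-cover definition.
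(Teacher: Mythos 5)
The paper itself gives no proof of this theorem: it is imported verbatim from Theorem 3.7 of \cite{os5}, so the only comparison available inside this paper is with the closely related implications $(3)\Leftrightarrow(4)$ of Theorem 3.4. Your argument is the standard duality between $k$-covers of $X$ and $\tau_k$-neighbourhoods of ${\bf 0}$, and the direction $(1)\Rightarrow(2)$ is complete as written (the Urysohn function exists because $K$ is compact, $X\setminus U$ is closed and nonempty, and $X$ is Tychonoff).

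In $(2)\Rightarrow(1)$ two points are left implicit and need to be filled in. First, the step ``choose $n>1/\varepsilon$ with $F\subseteq U(f_n,n)$'' is not a formal consequence of $\{U(f_n,n):n\in\mathbb{N}\}$ merely being an $\omega$-cover; you need the standard fact that every finite subset of $X$ lies in \emph{infinitely many} members of an $\omega$-cover not containing $X$ (if $F$ lay only in $U_{n_1},\dots,U_{n_m}$, then every finite $H$ would satisfy $F\cup H\subseteq U_{n_i}$ for some $i$, so these $m$ sets would form a finite $\omega$-cover, forcing one of them to equal $X$). The paper invokes exactly the $k$-cover analogue of this fact in the proof of $(4)\Rightarrow(6)$ of Theorem 3.4, so this is a one-line patch, but it must be stated. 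Second, discarding the $f$ with $U(f,n)=X$ can in principle destroy the $k$-cover property of $\mathcal{U}_n$ (for instance if $A_n$ consists entirely of functions bounded in modulus by $1/n$); in that degenerate situation any such $f$ already satisfies $|f|<1/n$ on all of $X$, and one splits into the case where this occurs for infinitely many $n$ (select those $f$'s and conclude directly) and the case where it occurs for only finitely many $n$ (apply $S_1(\mathcal{K},\Omega)$ to the tail). You flag both issues in your ``main obstacle'' remarks but resolve neither. For what it is worth, the paper's proof of $(4)\Rightarrow(3)$ in Theorem 3.4 handles the $\varepsilon$-calibration by running a doubly indexed family of $k$-covers $M_i^m$ with thresholds $1/m$; your single-threshold version is cleaner and, unlike that argument, selects exactly one function from each $A_n$ as the $S_1$ format requires --- but only once the two points above are made explicit.
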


\medskip
Recall that the $i$-weight $iw(X)$ of a space $X$ is the smallest
infinite cardinal number $\tau$ such that $X$ can be mapped by a
one-to-one continuous mapping onto a Tychonoff space of the weight
not greater than $\tau$.

\medskip

\begin{theorem} (Noble \cite{nob}) \label{th31} A space $C_{k}(X)$ is separable iff
$iw(X)=\aleph_0$.
\end{theorem}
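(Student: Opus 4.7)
The plan is to prove the two implications separately using standard function-space techniques.

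For the implication $(\Leftarrow)$ where $C_k(X)$ is separable with countable dense set $\{f_n : n \in \mathbb{N}\}$, I would consider the diagonal map $\varphi : X \to \mathbb{R}^{\mathbb{N}}$ given by $\varphi(x) = (f_n(x))_{n \in \mathbb{N}}$, which is continuous. The main point is injectivity: for distinct $x, y \in X$, Tychonoff separation yields $f \in C(X)$ with $f(x) \ne f(y)$; since $\tau_k$ refines $\tau_p$, the set $\{f_n\}$ is also dense in $C_p(X)$, so some $f_n$ satisfies $f_n(x) \ne f_n(y)$. Then $\varphi(X)$ is a Tychonoff space of weight $\le \aleph_0$, giving $iw(X) \le \aleph_0$.

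For the implication $(\Rightarrow)$, let $\varphi : X \to Y$ be a continuous bijection onto a Tychonoff space $Y$ with $w(Y) \le \aleph_0$. By Urysohn's metrization theorem $Y$ embeds in the Hilbert cube $I^{\mathbb{N}}$, so I may regard $\varphi$ as a continuous injection into $I^{\mathbb{N}}$. The crucial observation is that for every compact $K \subseteq X$ the restriction $\varphi|_K$ is a homeomorphism onto its image, since a continuous bijection from a compact Hausdorff space is closed. I would then propose the countable set $\{p \circ \varphi : p \text{ is a rational-coefficient polynomial in the coordinate projections of } I^{\mathbb{N}}\}$ as a dense subset of $C_k(X)$. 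To verify density, fix $f \in C(X)$, compact $K \subseteq X$, and $\epsilon > 0$: transport $f|_K$ to a continuous function on $\varphi(K)$ via $\varphi|_K^{-1}$, extend by Tietze to $F \in C(I^{\mathbb{N}})$, and approximate $F$ uniformly within $\epsilon$ by such a polynomial $p$ using Stone--Weierstrass. Then $|p(\varphi(x)) - f(x)| < \epsilon$ on $K$, as required.

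The main obstacle is the backward direction. The naive idea of pulling back a dense subset of $C_k(Y)$ via $\varphi^\ast$ fails because not every $f \in C(X)$ is of the form $g \circ \varphi$. The workaround is to approximate $f$ only on the relevant compactum $K$, exploit that $\varphi|_K$ is a homeomorphism so that $f|_K$ transports to $\varphi(K) \subseteq I^{\mathbb{N}}$, and then combine Tietze extension with the Stone--Weierstrass countable algebra on the Hilbert cube. Once this mechanism is in place, the final estimate is routine.
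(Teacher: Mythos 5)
Your argument is correct, and it is worth noting that the paper itself offers no proof of this statement at all: it is quoted as a known result of Noble \cite{nob}, so there is no internal proof to compare against. What you give is essentially the standard argument behind Noble's theorem. The direction from separability of $C_k(X)$ to $iw(X)=\aleph_0$ via the diagonal map $x\mapsto(f_n(x))_n$ is sound; the injectivity step correctly uses that a $\tau_k$-dense set is $\tau_p$-dense because $\tau_k\supseteq\tau_p$. The converse via a continuous injection into the Hilbert cube is also sound: the key observation that $\varphi\restriction K$ is a homeomorphism for each compact $K$ (continuous bijection from a compact space to a Hausdorff space) is exactly what makes the ``approximate only on $K$'' strategy work, and the Tietze--plus--Stone--Weierstrass step on $I^{\mathbb{N}}$ with rational-coefficient polynomials in finitely many coordinates yields a genuinely countable dense family $\{p\circ\varphi\}$ in $C_k(X)$. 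The only blemish is cosmetic: you have swapped the labels $(\Rightarrow)$ and $(\Leftarrow)$ relative to the statement ``$C_k(X)$ is separable iff $iw(X)=\aleph_0$'' --- your first paragraph assumes separability and derives $iw(X)=\aleph_0$, which is the forward implication, and vice versa. The mathematics in each paragraph matches its explicitly stated hypothesis, so nothing breaks, but the labels should be corrected.
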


Recall that a subset $A$ of a bitopological space
$(X,\tau_1,\tau_2)$ is bidense (double dense or short $d$-dense)
in $X$ if $A$ is dense in both $(X,\tau_1)$ and $(X,\tau_2)$
(\cite{br}). $(X,\tau_1,\tau_2)$ is {\it d-separable} if there is
a countable set $A$ which is $d$-dense in $X$. Note that if
$iw(X)=\aleph_0$, then $(C(X), \tau_k, \tau_p)$ is $d$-separable.

\medskip

\begin{theorem}\label{th1} For a space $X$ with $iw(X)=\aleph_0$ the following are equivalent:

\begin{enumerate}

\item  $(C(X), \tau_k, \tau_p)$ has the property
$S_{1}(\mathcal{D}^k,\mathcal{D}^p)$;

\item  $(C(X), \tau_k, \tau_p)$ has the property
$S_{1}(\mathcal{D}^k,\Omega^p_0)$;

\item  $(C(X), \tau_k, \tau_p)$ has the property
$S_{1}(\Omega^k_0,\Omega^p_0)$;

\item  $X$ has the property $S_{1}(\mathcal{K},\Omega)$;

\item  $(C(X), \tau_k, \tau_p)$ has the property
${\mathcal{D}^k\choose\mathcal{D}^p}$;

\item  ONE has no winning strategy in the game
$G_1(\mathcal{K},\Omega)$;

\item  ONE has no winning strategy in the game
$G_1(\mathcal{D}^k,\mathcal{D}^p)$;

\item  ONE has no winning strategy in the game $G_1(\Omega^k_{\bf
0},\Omega^p_0)$;

\item  ONE has no winning strategy in the game
$G_1(\mathcal{D}^k,\Omega^p_0)$.

\end{enumerate}

\end{theorem}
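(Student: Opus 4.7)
The plan is to use condition (4), $S_1(\mathcal{K},\Omega)$ on $X$, as the hub. Since Theorem 3.1 already gives (3)$\Leftrightarrow$(4), the selection work reduces to the cycle (3)$\Rightarrow$(2)$\Rightarrow$(1)$\Rightarrow$(4) together with (1)$\Leftrightarrow$(5), and the game work reduces to a Pawlikowski-type step (4)$\Rightarrow$(6) together with strategy translations (6)$\Rightarrow$(8)$\Rightarrow$(9)$\Rightarrow$(7). Two devices do most of the shuttling between the $X$-side and the $C(X)$-side: the Sakai-style dense set
\[
D(\mathcal{U}) := \{f\in C(X) : f|_{X\setminus U}\equiv 0 \text{ for some } U\in\mathcal{U}\},
\]
which sends a $k$-cover $\mathcal{U}$ of $X$ into $\mathcal{D}^k$ by Tychonoff separation, and the topological-group homogeneity of $C(X)$ combined with the separability of $C_p(X)$ that follows, via Noble's theorem, from the hypothesis $iw(X)=\aleph_0$.

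For the selection cycle: (3)$\Rightarrow$(2) is a direct source substitution, because $D\setminus\{{\bf 0}\}\in\Omega^k_{\bf 0}$ for every $D\in\mathcal{D}^k$ (as $C_k(X)$ has no isolated points). For (2)$\Rightarrow$(1) I would fix a countable $\tau_p$-dense set $\{g_m:m\in\mathbb{N}\}\subseteq C_p(X)$, partition $\mathbb{N}=\bigsqcup_m N_m$ into infinite blocks, and apply (2) within each block to the translated sub-sequence $(D_n-g_m)_{n\in N_m}\subseteq\mathcal{D}^k$; setting $d_n:=h_n+g_m$ for $n\in N_m$ places $g_m$ in $\overline{\{d_n:n\in N_m\}}^{\tau_p}$ for every $m$, so $\{d_n\}\in\mathcal{D}^p$. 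For (1)$\Rightarrow$(4) I would feed $D_n:=D(\mathcal{U}_n)$ into (1), pick a support $U_n\in\mathcal{U}_n$ for each returned $f_n$, and observe that any finite $F\subseteq X$ must lie in some $U_n$ because some $f_n$ is $\tau_p$-close to the constant function $1$ on $F$. The step (1)$\Rightarrow$(5) is trivial, and (5)$\Rightarrow$(4) applies the same $D(\mathcal{U})$-construction to a single $k$-cover and then lifts the resulting countable $\omega$-subfamily to a sequential selection by exploiting that $iw(X)=\aleph_0$ makes $X$ cosmic, hence $X^n$ Lindel\"of for every $n$.

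For the games, the routine game-to-selection direction gives (6)$\Rightarrow$(4), (7)$\Rightarrow$(1), (8)$\Rightarrow$(3), and (9)$\Rightarrow$(2). The crucial Pawlikowski-type step (4)$\Rightarrow$(6) goes by organizing an alleged winning strategy $\sigma$ for ONE in $G_1(\mathcal{K},\Omega)$ along an $\omega^{<\omega}$-tree of partial plays and using $S_1(\mathcal{K},\Omega)$ at each level to diagonalize into a counter-play whose trace is an $\omega$-cover. Once (6) holds, (6)$\Rightarrow$(8)$\Rightarrow$(9)$\Rightarrow$(7) is obtained by reinterpreting the three selection transitions above as strategy translations: $D(\mathcal{U})$ lifts an alleged ONE-strategy in $G_1(\Omega^k_{\bf 0},\Omega^p_{\bf 0})$ to one in $G_1(\mathcal{K},\Omega)$; deletion of ${\bf 0}$ exchanges $\mathcal{D}^k$-strategies with $\Omega^k_{\bf 0}$-strategies; and the block partition with $\{g_m\}$ upgrades $\Omega^p_{\bf 0}$-winning for TWO to $\mathcal{D}^p$-winning. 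The main obstacle I anticipate is precisely this Pawlikowski-type adaptation to the $\mathcal{K}/\Omega$ pair, because the refinement selected at every tree node must preserve the $k$-cover structure; I would carry out the tree bookkeeping on the homogeneous side $G_1(\Omega^k_{\bf 0},\Omega^p_{\bf 0})$, where the translation structure makes the diagonalization cleaner, and then export back to $G_1(\mathcal{K},\Omega)$ through Theorem 3.1.
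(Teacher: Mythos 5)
Your architecture for items (1)--(4) and (6)--(9) is essentially the paper's own: the same Sakai-style dense sets built from $k$-covers for $(1)\Rightarrow(4)$ (the paper uses functions equal to $1$ off a cover element and tests against ${\bf 0}$, you use the dual $0/1$ convention), the same translation-plus-countable-dense-set argument for $(2)\Rightarrow(1)$, reliance on the quoted equivalence $S_1(\Omega^k_{\bf 0},\Omega^p_{\bf 0})\Leftrightarrow S_1(\mathcal{K},\Omega)$ for $(3)\Leftrightarrow(4)$ (the paper re-proves $(4)\Rightarrow(3)$ directly via the preimages $h^{-1}(-\frac{1}{m},\frac{1}{m})$, a cosmetic difference), and the same tree/diagonalization argument for $(4)\Rightarrow(6)$, with the remaining game equivalences obtained by repeating or translating that argument. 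All of that is sound and matches the paper.

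The genuine gap is your step $(5)\Rightarrow(4)$. Applying ${\mathcal{D}^k\choose\mathcal{D}^p}$ to the single dense set $D(\mathcal{U})$ only yields that every $k$-cover has a countable $\omega$-subcover, i.e.\ the property ${\mathcal{K}\choose\Omega}$, and your proposed upgrade of ${\mathcal{K}\choose\Omega}$ to $S_1(\mathcal{K},\Omega)$ via ``$X$ is cosmic, hence $X^n$ is Lindel\"of'' cannot work: these hypotheses are strictly weaker than $S_1(\mathcal{K},\Omega)$. Take $X=\omega^{\omega}$. It is Polish, so all finite powers are Lindel\"of and (by Gerlits--Nagy, since every $k$-cover is in particular an $\omega$-cover) every $k$-cover has a countable $\omega$-subcover; yet the $k$-covers $\mathcal{U}_n=\{\,\{g\in\omega^{\omega}: g(n)\le f(n)\}: f\in\omega^{\omega}\}$ (these are $k$-covers because projections of compact sets are finite) admit no selection that even covers $X$: given $V_n=\{g:g(n)\le f_n(n)\}$, the function $g(n)=f_n(n)+1$ lies in no $V_n$. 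So $\omega^{\omega}$ fails $S_1(\mathcal{K},\mathcal{O})$, a fortiori $S_1(\mathcal{K},\Omega)$. The contrast with the $\Gamma$-case, where ${\mathcal{K}\choose\Gamma}$ genuinely is equivalent to $S_1(\mathcal{K},\Gamma)$, is probably the source of the slip. A correct proof of $(5)\Rightarrow(4)$ (or $(5)\Rightarrow(1)$) must extract a selection from the whole sequence $(\mathcal{U}_n)$ out of a single application of the choose-principle, for instance by encoding the entire sequence of covers into one dense subset of $C_k(X)$; be aware that the paper's own proof is silent on item (5) altogether, so this is exactly the step that still needs real work.
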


\begin{proof}

$(1)\Rightarrow(4)$. Let $(U_i^k: i\in \mathbb{N})$ be a sequence
$k$-covers of $X$ and let $D = \{f_s : s\in \mathbb{N} \}$ be a
countable dense set in $C_k(X)$. Consider $P_i :=
\{h^i_{L,W,f_s}\in C(X): h^i_{L,W,f_s} \upharpoonright L = f_s
\upharpoonright L, L \in \mathbb{K}(X), L \subset W :
 W\in U_i^k, h \upharpoonright (X \setminus W) = 1, f_s \in D\}$.
Note $P_i$ is a dense subset of $C_k(X)$ for each $i\in
\mathbb{N}$. Indeed fix $f\in C(X), K\in \mathbb{K}(X), \epsilon
> 0$, then $\langle f, K, \epsilon \rangle$ is neighborhood of
$f$. There is $W_k\in U_i^k$ such that $K\subset W_k$. Then there
exists $f_s\in D$ such that $f_s\in \langle f, K, \epsilon
\rangle$. Take $h^i_{K,W_k,f_s}\in \langle f, K, \epsilon
\rangle$.

Since $\{P_i: i\in \mathbb{N} \}$ is a countable set of dense sets
in $C_k(X)$, by $(1)$, there exists $\{p_i : i\in \mathbb{N}\}$
such that $p_i\in P_i$ and $\{p_i : i\in \mathbb{N}\}$ is a dense
set in $C_p(X)$. For $\{p_i = h_{L_i,W_i,f_s^i} : i\in
\mathbb{N}\}$, we have that $\{W_i : i\in \mathbb{N}\}$ is
$\omega$-cover of $X$. Indeed, let $M = \{x_1, x_2, ..., x_k\}$ is
a finite set in $X$. Consider $U = \langle {\bf 0}, M,
(-\frac{1}{2};\frac{1}{2}) \rangle$, then there exists $i'$ such
that $p_{i'}\in U$. It follows that $M\subset W_{i'}$.

$(3)\Rightarrow(2)$ is immediate.

$(4)\Rightarrow(3)$. Let $\{P_i: i\in \mathbb{N}\}$ such that
$P_i\in \Omega_0^k$. Fix $m\in \mathbb{N}$. Take $M_i^m :=
\{W_{i,m,h}=h^{-1}(-\frac{1}{m};\frac{1}{m}) : h\in P_i\}$, where
$W_{i,m,n}$ is nonempty. Note that $M_i^m$ is $k$ - cover of $X$.
Indeed for each $K\in \mathbb{K}, \langle {\bf 0}, K,
(-\frac{1}{m};\frac{1}{m}) \rangle$ there exists $p\in P_i : p\in
\langle {\bf 0}, K, (-\frac{1}{m};\frac{1}{m}) \rangle$, then $K
\subset W_{i,m,p_i} = p^{-1}(-\frac{1}{m};\frac{1}{m})$. Mean for
every $i,m$ $M_i^m$ is family $\mathcal{K}$ - covers in $X$. Then
by $(4)$, there is $\{W_{i,m,h_{i,m}} : i, m\in \mathbb{N}\}$ such
that $W_{i,m,h_{i,m}}\in M_i^m$ and $\{W_{i,m,h_{i,m}} : i, m\in
\mathbb{N}\}$ is $\omega$ - cover of $X$. Show that $\{h_{i,m} :
i, m\in \mathbb{N}\} \in \Omega_0^p$. Take an arbitrary $S =
\{x_1, x_2, ..., x_k\}$ and $\epsilon > 0$. Consider $\langle
\overline{0}, S, \epsilon \rangle$ - neighborhood of
$\overline{0}$. There is $m' : \frac{1}{m'} < \epsilon$. Since a
$\omega$ - cover of $X$ is large cover of $X$, then there are $m',
i'\in \mathbb{N}$ such that $S\subset W_{i',m',h_{i',m'}}$ and
$\frac{1}{m'} < \epsilon$, therefor $h_{i',m'}\in \langle {\bf 0},
S, \epsilon \rangle$.

$(2)\Rightarrow(1)$. Let $\{D_{i,j} : i, j \in \mathbb{N}\}$ be a
countable set of dense sets in $C_k(X)$. Let $D=\{d_i : i\in
\mathbb{N}\}$ is a countable dense set  in $C_k(X)$ for every
$i\in \mathbb{N}$. By $S_1(D^k,\Omega^p_{d_i})$ there exists
$\{d_{i,j} : j\in \mathbb{N}\}$ such that $d_{i,j}\in D_{i,j}$ and
$\{d_{i,j} : j\in \mathbb{N}\} \in \Omega^p_{d_i}$. Consider $M =
\{d_{i,j} : i,j, \in \mathbb{N} \}$. Prove that $M$ is dense in
$C_p(X)$. Fix $f\in C(X)$. Let $L = \{x_1,x_2,...,x_n\}$ be a
finite set of $X$ and $\epsilon > 0$. The set $\langle f, L,
\epsilon \rangle$ is a neighborhood of $f$, then there is
$d_{i'}\in D$ such that $d_{i'}\in \langle f, L, \epsilon
\rangle$, than there is
 $j'$ such that $d_{i',j'}\in \langle f, L, \epsilon \rangle$, than $M\in D^p$

 $(6)\Rightarrow(4)$ is immediate.

 $(4)\Rightarrow(6)$. Let $\sigma$ be a strategy for ONE in $G_1(\mathcal{K}, \Omega)$ and let the first move of ONE be a $k$-cover
 $\sigma(\emptyset)=\{U_{(\alpha^1)}: \alpha^1\in \Lambda^1\}$. Suppose that for each finite sequence $s$ of numbers $\alpha^i\in \Lambda^i$ of length
 at most $m$, $U_s$ has been already defined. Then define $\{U_{(\alpha^1,...,\alpha^m,\alpha^k)}: \alpha^k \in \Lambda^k\}$ to be the set
 $\sigma(U_{(\alpha^1)}, U_{(\alpha^1, \alpha^2)},...,U_{(\alpha^1,..., \alpha^m)})\setminus{\{U_{(\alpha^1)},U_{(\alpha^1, \alpha^2)},...,U_{(\alpha^1,...,\alpha^m)}\}}$.
 Because each compact subset of $X$ belongs to infinitely many elements of a $k$-cover, we have that,
 for each $s$, a finite sequence of numbers $\alpha^i\in \Lambda^i$, the set $\{U_{s\frown(\alpha^n)}: \alpha^n\in \Lambda^n\}$ is a $k$-cover.
 Apply (4) and, for each $s$, choose $\alpha^s\in \Lambda^s$ such that $\{U_{s\frown(\alpha^s)}: s$ a finite sequence of numbers $\alpha^i\in \Lambda^i, i\in\mathbb{N}$ $\}$
 is a $\omega$-cover of $X$. Then inductively define a sequence $\alpha^1=\alpha^{\emptyset}, \alpha^{k+1}=\alpha^{(\alpha^1,...,\alpha^k)}$ for $k \ge 1$.
 Then $U_{\alpha^1}, U_{\alpha^1,\alpha^2},...,U_{\alpha^1,...,\alpha^k},...$ is a $\omega$-cover, and because it is, in fact, a
 sequence of moves TWO in a play of game $G_1(\mathcal{K},\Omega)$, $\sigma$ is not a winning strategy for ONE.

Similarly to $(4)\Leftrightarrow(6)$ we have that
 $(1)\Leftrightarrow(7)$,  $(2)\Leftrightarrow(9)$ and
 $(3)\Leftrightarrow(8)$.

\end{proof}

\section{$S_{fin}(\mathcal{D}^k,\mathcal{D}^p)$ and $G_{fin}(\mathcal{D}^k,\mathcal{D}^p)$}

\begin{theorem}(Theorem 3.9 in \cite{os5} for $\lambda=k$ and $\mu=p$) For a space $X$ the following are
equivalent:

\begin{enumerate}

\item  $(C(X), \tau_k, \tau_p)$ has the property
$S_{fin}(\Omega^{k}_{\bf 0},\Omega^{p}_{\bf 0})$;

\item  $X$ has the property $S_{fin}(\mathcal{K},\Omega)$.

\end{enumerate}

\end{theorem}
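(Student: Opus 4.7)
My plan is to treat this as the $S_{fin}$-analogue of the equivalence $(3)\Leftrightarrow(4)$ in Theorem~\ref{th1}, which relates $S_1(\Omega^k_{\bf 0}, \Omega^p_{\bf 0})$ and $S_1(\mathcal{K},\Omega)$. The two conversions used there---between open $k$-covers of $X$ on the one hand and subsets of $C(X)$ with ${\bf 0}$ in their $\tau_k$-closure on the other---should carry over verbatim, with single-element selections replaced by finite selections.

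For the implication $(2)\Rightarrow(1)$, I will start from a sequence $(P_i : i\in\mathbb{N})$ of elements of $\Omega^k_{\bf 0}$ and, for each $i$, set $M_i := \{h^{-1}((-1/i,\, 1/i)) : h\in P_i\}$, keeping only proper subsets of $X$. The hypothesis ${\bf 0}\in\overline{P_i}$ in $\tau_k$ translates directly into $M_i$ being an open $k$-cover of $X$. Applying $S_{fin}(\mathcal{K},\Omega)$ yields finite $N_i\subset M_i$ whose union is an $\omega$-cover; lifting each $W\in N_i$ back to a witness function $h_W\in P_i$ produces finite subsets $F_i\subset P_i$. To verify $\bigcup_i F_i\in\Omega^p_{\bf 0}$, I will use the fact that any $\omega$-cover not containing $X$ is automatically infinite and therefore a large cover, which allows one to locate $W$ with $S\subset W$ inside $N_i$ for arbitrarily large $i$, and in particular for $i$ with $1/i$ below any prescribed $\varepsilon$.

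For $(1)\Rightarrow(2)$, I will start from a sequence $(\mathcal{U}^k_i)$ of open $k$-covers of $X$ and, using the Tychonoff property, associate to each triple $(i,K,W)$ with $K\in\mathbb{K}(X)$, $W\in\mathcal{U}^k_i$, $K\subset W$, a continuous function $h^i_{K,W}$ that vanishes on $K$ and equals $1$ on $X\setminus W$. The set $P_i$ of all such functions lies in $\Omega^k_{\bf 0}$. Applying $(1)$ gives finite $F_i\subset P_i$ with $\bigcup_i F_i\in\Omega^p_{\bf 0}$; projecting each $h\in F_i$ to its associated $W$ yields a finite $E_i\subset\mathcal{U}^k_i$. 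To see $\bigcup_i E_i\in\Omega$, I will test with a finite $S\subset X$: the $\tau_p$-neighborhood $\langle{\bf 0},S,1/2\rangle$ must contain some $h\in\bigcup_i F_i$, and then the prescribed value $h\equiv 1$ on $X\setminus W(h)$ forces $S\subset W(h)$.

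The main subtlety is in $(2)\Rightarrow(1)$: the conclusion of $S_{fin}(\mathcal{K},\Omega)$ records no scale information, so the trick is to couple the index $i$ with the scale $1/i$ in the very definition of $M_i$, and then exploit the large-cover promotion to guarantee that the obtained $\omega$-cover hits every finite $S\subset X$ via some $N_i$ with $i$ arbitrarily large. In the other direction the key is to ensure that the witnesses $h^i_{K,W}$ are pinned to $0$ on $K$ and to $1$ off $W$, so that one recovers both membership in $\Omega^k_{\bf 0}$ on the way in and the containment $S\subset W(h)$ on the way out.
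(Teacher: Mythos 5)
The paper offers no proof of this statement (it is quoted from \cite{os5}), but your argument is correct and follows essentially the same route the paper itself takes for the $S_1$ analogue in Section~3, namely the implications $(1)\Rightarrow(4)$ and $(4)\Rightarrow(3)$ there: $k$-covers are turned into families of preimages $h^{-1}\left(\left(-\frac{1}{i},\frac{1}{i}\right)\right)$ in one direction, and into Urysohn-type witnesses pinned to $0$ on $K$ and to $1$ off $W$ in the other. Your coupling of the scale $\frac{1}{i}$ to the $i$-th cover, together with the observation that the finiteness of each $N_i$ forces a given finite $S$ to be captured at arbitrarily large $i$, is if anything a slightly cleaner handling of the scale than the doubly indexed families $M^m_i$ appearing in the paper's $(4)\Rightarrow(3)$.
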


\begin{theorem}\label{th111} For a space $X$ with $iw(X)=\aleph_0$ the following statements are equivalent:

\begin{enumerate}

\item  $(C(X), \tau_k, \tau_p)$ has the property
$S_{fin}(\mathcal{D}^k,\mathcal{D}^p)$;

\item  $(C(X), \tau_k, \tau_p)$ has the property
$S_{fin}(\mathcal{D}^k,\Omega^p_0)$;

\item  $(C(X), \tau_k, \tau_p)$ has the property
$S_{fin}(\Omega^k_0,\Omega^p_0)$;

\item  $X$ satisfies the selection principle $S_{fin}(\mathcal{K},\Omega)$;

\item  ONE has no winning strategy in the game
$G_{fin}(\mathcal{K},\Omega)$;

\item  ONE has no winning strategy in the game
$G_{fin}(\mathcal{D}^k,\mathcal{D}^p)$;

\item  ONE has no winning strategy in the game
$G_{fin}(\Omega^k_{\bf 0},\Omega^p_0)$;

\item  ONE has no winning strategy in the game
$G_{fin}(\mathcal{D}^k,\Omega^p_0)$.

\end{enumerate}

\end{theorem}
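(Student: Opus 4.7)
The plan is to mirror the circular chain of implications established for Theorem \ref{th1}, adapting each step from the ``single point'' $S_1$/$G_1$ setting to the ``finite set'' $S_{fin}$/$G_{fin}$ setting. Concretely, I would prove the cycle $(1)\Rightarrow(4)\Rightarrow(3)\Rightarrow(2)\Rightarrow(1)$ together with the game-side cycle $(5)\Rightarrow(4)$ (immediate from the standard fact that non-losing of ONE implies the selection principle), and then pair each selection principle with its companion game by the strategy-unfolding argument, giving $(4)\Leftrightarrow(5)$, $(1)\Leftrightarrow(6)$, $(3)\Leftrightarrow(7)$, and $(2)\Leftrightarrow(8)$.

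For $(1)\Rightarrow(4)$ I would reuse the auxiliary dense sets $P_i\subset C_k(X)$ from the proof of Theorem \ref{th1}: for a sequence of $k$-covers $(U^k_i)$ and a countable dense $\{f_s\}\subset C_k(X)$, put $P_i=\{h^i_{L,W,f_s}: L\in\mathbb{K}(X),\ L\subset W\in U^k_i,\ h\upharpoonright L = f_s\upharpoonright L,\ h\upharpoonright(X\setminus W)=1\}$. Applying $S_{fin}(\mathcal{D}^k,\mathcal{D}^p)$ returns finite $F_i\subset P_i$, and the corresponding witnessing opens $\{W : h^i_{L,W,f_s}\in F_i,\ i\in\mathbb{N}\}$ form an $\omega$-cover of $X$ by testing against neighborhoods of $\mathbf 0$ of the form $\langle\mathbf 0,M,(-1/2,1/2)\rangle$ at finite $M\subset X$. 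For $(4)\Rightarrow(3)$ the $k$-covers $M^m_i=\{h^{-1}(-1/m,1/m):h\in P_i\}$ of Theorem \ref{th1} are again the right object; $S_{fin}(\mathcal{K},\Omega)$ applied to the double-indexed family $(M^m_i)$ yields finite subcollections whose preimage-lifts give finite $F_i\subset P_i$ with $\bigcup F_i\in\Omega^p_{\mathbf 0}$. The implication $(3)\Rightarrow(2)$ is trivial, and $(2)\Rightarrow(1)$ proceeds exactly as in Theorem \ref{th1}, using a countable $\tau_k$-dense set $\{d_i\}$ (which exists by Theorem \ref{th31} since $iw(X)=\aleph_0$) and the semi-local hypothesis at each $d_i$ in place of $\mathbf 0$; the union of the resulting finite selections is $\tau_p$-dense in $C(X)$ by translating neighborhoods of an arbitrary $f$ to neighborhoods of the $d_i$ closest to $f$.

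The main obstacle is the game-theoretic implication $(4)\Rightarrow(5)$, since the tree-unfolding used for $G_1$ in Theorem \ref{th1} indexed nodes by single choices, whereas ONE's opponent in $G_{fin}$ responds with finite subsets of the posted $k$-cover. I would address this by building a tree whose nodes at level $n$ are indexed by sequences $(\mathcal F_1,\dots,\mathcal F_n)$ of finite subfamilies of the $k$-covers posted by $\sigma$ along that branch (removing the finitely many sets already used so as to keep each posted family a $k$-cover, which is legitimate because deleting finitely many elements from a $k$-cover preserves the $k$-cover property). Enumerate the countable set of all such nodes and apply $S_{fin}(\mathcal{K},\Omega)$ to the sequence of $k$-covers they produce, obtaining a finite selection at each node; one then diagonalizes along a single branch of the tree defined inductively by $\mathcal F_{n+1}=$ the selection made at the node $(\mathcal F_1,\dots,\mathcal F_n)$. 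This branch is a legal play against $\sigma$ whose move-union is the $\omega$-cover witness returned by the selection principle, so $\sigma$ is not winning. The three remaining equivalences $(1)\Leftrightarrow(6)$, $(3)\Leftrightarrow(7)$, $(2)\Leftrightarrow(8)$ follow by transcribing this same argument with $\mathcal{D}^k$, $\Omega^k_{\mathbf 0}$, and the semi-local variant in place of $\mathcal{K}$, exactly as indicated at the end of the proof of Theorem \ref{th1}.
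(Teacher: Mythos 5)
Your proposal is correct and follows essentially the same route as the paper, whose entire proof of this theorem is the single remark that ``the implications are proved similarly to the proof of Theorem~\ref{th1}''; your cycle $(1)\Rightarrow(4)\Rightarrow(3)\Rightarrow(2)\Rightarrow(1)$, the pairing of each selection principle with its game via the tree-unfolding of ONE's strategy, and the reuse of the auxiliary dense sets $P_i$ and the covers $M_i^m$ are exactly the intended adaptation. If anything, your write-up is more explicit than the paper about the one point that genuinely changes in the $G_{fin}$ setting (indexing the strategy tree by finite subfamilies rather than single sets), which is the right thing to flag.
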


\begin{proof}

The implications are proved similarly to the proof of Theorem
\ref{th1}.

\end{proof}

\section{$S_{1}(\mathcal{D}^k,\mathcal{S}^p)$ and $G_{1}(\mathcal{D}^k,\mathcal{S}^p)$}

\begin{theorem}(Theorem 15 in \cite{cmkm})\label{th11} For a space $X$ the following are
equivalent:

\begin{enumerate}

\item  $(C(X), \tau_k, \tau_p)$ has the property
$S_{1}(\Omega^{k}_{\bf 0},\Gamma^{p}_{\bf 0})$;

\item  $X$ has the property $S_{1}(\mathcal{K},\Gamma)$.

\end{enumerate}

\end{theorem}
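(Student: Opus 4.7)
The plan is to establish both implications via the standard correspondence between open $k$-covers of $X$ and subsets of $C(X)$ that cluster at $\mathbf{0}$ in the compact-open topology. One direction is built with Urysohn-type step functions that detect a given open set, and the other with preimages of shrinking intervals $(-1/n,1/n)$ around $0$.

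For $(1)\Rightarrow(2)$, I would start from a sequence $(\mathcal{U}_n:n\in\mathbb{N})$ of open $k$-covers and, for each $n$, form $P_n\subseteq C(X)$ consisting of continuous functions $h_{U,K}$ (one for each pair $K\subseteq U\in\mathcal{U}_n$ with $K\in\mathbb{K}(X)$) that vanish on $K$ and equal $1$ on $X\setminus U$; such functions exist by the Tychonoff property of $X$. Any basic $\tau_k$-neighborhood $\langle\mathbf{0},K,\varepsilon\rangle$ of $\mathbf{0}$ then contains some $h_{U,K}$ because $\mathcal{U}_n$ is a $k$-cover, so $P_n\in\Omega^k_{\mathbf{0}}$. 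Applying hypothesis (1) yields a selection $h_n=h_{U_n,K_n}\in P_n$ with $\{h_n\}\in\Gamma^p_{\mathbf{0}}$. I then claim that $\{U_n:n\in\mathbb{N}\}$ is the desired $\gamma$-cover: for any finite $F\subseteq X$, convergence $h_n\to\mathbf{0}$ in $\tau_p$ forces $h_n\in\langle\mathbf{0},F,1\rangle$ for all but finitely many $n$, and since $h_n\equiv 1$ on $X\setminus U_n$ this means $F\subseteq U_n$ for all but finitely many $n$.

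For $(2)\Rightarrow(1)$, given a sequence $(D_n:n\in\mathbb{N})$ with $D_n\in\Omega^k_{\mathbf{0}}$, I would set
$$M_n=\{h^{-1}(-1/n,1/n):h\in D_n\}\setminus\{X\}.$$
Each $M_n$ is an open $k$-cover: for any compact $K$, the clustering $\mathbf{0}\in\overline{D_n}^{\tau_k}$ supplies $h\in D_n$ with $|h|<1/n$ on $K$, whence $K\subseteq h^{-1}(-1/n,1/n)$. Applying $S_1(\mathcal{K},\Gamma)$ produces $W_n=h_n^{-1}(-1/n,1/n)\in M_n$ with $\{W_n\}$ a $\gamma$-cover of $X$. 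To see that $\{h_n\}\in\Gamma^p_{\mathbf{0}}$, fix a basic $\tau_p$-neighborhood $\langle\mathbf{0},F,\varepsilon\rangle$ and pick $N$ with $1/N<\varepsilon$; the $\gamma$-cover property supplies $n_0$ with $F\subseteq W_n$ for all $n\geq n_0$, so for $n\geq\max(N,n_0)$ we have $|h_n(x)|<1/n\leq 1/N<\varepsilon$ for every $x\in F$. The rate $1/n$ is chosen precisely so that the single $\gamma$-cover simultaneously forces pointwise smallness; this is what lets the argument avoid any diagonal reindexing over an auxiliary accuracy parameter $m$.

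The main obstacle is technical rather than conceptual: the definitions of $\gamma$-cover and of $\Gamma^p_{\mathbf{0}}$ require the produced collections $\{U_n\}$ and $\{h_n\}$ to be infinite, to avoid $X$ and $\mathbf{0}$ respectively. This is addressed by arranging distinct choices in the constructions of $P_n$ and $M_n$ (using that $\mathbf{0}\notin D_n$ and $X\notin\mathcal{U}_n$, so the approximating Urysohn functions and preimages can be iterated to produce infinitely many distinct members). This bookkeeping is routine and does not alter the structure of the argument outlined above.
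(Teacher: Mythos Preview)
The paper does not give its own proof of this statement; it is quoted verbatim as Theorem~15 of \cite{cmkm} and used as a black box in the proof of Theorem~\ref{th1}. Your argument is correct and is exactly the standard duality underlying such results: Urysohn-type functions $h_{U,K}$ to pass from $k$-covers to $\Omega^k_{\mathbf 0}$-sets, and preimages $h^{-1}(-1/n,1/n)$ to go back. These are precisely the devices the paper itself uses when it does prove things, e.g.\ in $(1)\Rightarrow(4)$ and $(4)\Rightarrow(3)$ of Theorem~3.3 and in $(8)\Rightarrow(3)$ of Theorem~5.3.

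One minor remark on the bookkeeping you flag at the end. In $(2)\Rightarrow(1)$ the genuine nuisance is not producing infinitely many distinct selections but rather that $M_n=\{h^{-1}(-1/n,1/n):h\in D_n\}\setminus\{X\}$ can fail to be a $k$-cover: it may happen that every $h\in D_n$ witnessing $K\subset h^{-1}(-1/n,1/n)$ satisfies $|h|<1/n$ on all of $X$, so its preimage is $X$ and gets discarded (e.g.\ $D_n$ could consist entirely of small nonzero constants). The fix is indeed routine---for such $n$ simply choose any $h_n\in D_n$ with $\|h_n\|_\infty<1/n$, and apply $S_1(\mathcal K,\Gamma)$ only along the remaining indices---but it is worth naming the actual obstruction. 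Your single-index coupling of the accuracy $1/n$ to the sequence index is a clean touch; the paper's analogous arguments (Theorem~3.3) carry a separate parameter $m$, which your observation about $\gamma$-convergence renders unnecessary here.
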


\begin{theorem}(Theorem 10 in \cite{mkm})\label{th14} For a space $X$ the following are
equivalent:

\begin{enumerate}

\item  $X$ has the property $S_{fin}(\mathcal{K},\Gamma)$;

\item  $X$ has the property $S_{1}(\mathcal{K},\Gamma)$;

\item  ONE has no winning strategy in the game
$G_1(\mathcal{K},\Gamma)$.

\end{enumerate}

\end{theorem}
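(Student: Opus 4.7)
The plan is to close the cycle of implications $(3) \Rightarrow (2) \Rightarrow (1) \Rightarrow (3)$, handling the easy directions quickly and reserving most effort for the passage from finite to single selections.

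The implications $(3) \Rightarrow (2)$ and $(2) \Rightarrow (1)$ are standard. For the first, if $S_{1}(\mathcal{K},\Gamma)$ fails with witnessing sequence $(U_n)_n$ of $k$-covers, then the strategy $\sigma(f_1,\dots,f_n) = U_{n+1}$ (ignoring TWO's moves entirely) is winning for ONE, contradicting $(3)$. The second is trivial because every singleton is finite.

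The substance is $(1) \Rightarrow (3)$, which I would split as $(1) \Rightarrow (2)$ followed by $(2) \Rightarrow (3)$. For the Gerlits--Nagy-style step $(1) \Rightarrow (2)$, given a sequence $(U_n)_n$ of $k$-covers I would first pass to the intersection-closure $U_n^{\cap} = \{V_1 \cap \dots \cap V_m : V_i \in U_n,\ m \ge 1\}$, which is still a $k$-cover since any compact $K \subset V \in U_n$ already lies in $V \in U_n^{\cap}$. Applying $S_{fin}(\mathcal{K},\Gamma)$ to $(U_n^{\cap})_n$ produces finite $F_n \subset U_n^{\cap}$ with $\bigcup_n F_n \in \Gamma$; replacing $F_n$ by the single element $W_n := \bigcap F_n \in U_n^{\cap}$ preserves the $\gamma$-cover property, since for any finite $K$ there is $N$ such that every element of $\bigcup_{n \ge N} F_n$ contains $K$, and therefore $K \subset W_n$ for all $n \ge N$. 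To descend from a single selection in $U_n^{\cap}$ to a single element of $U_n$ itself, I would split $\mathbb{N}$ into infinitely many infinite blocks and run the construction block-by-block, using the finite number of constituents of each $W_n$ as a way to re-index and pull out one element per original $U_n$ while preserving the eventual-containment condition.

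For $(2) \Rightarrow (3)$ I would mirror the tree construction carried out in the proof of $(4) \Rightarrow (6)$ of Theorem \ref{th1}. Given a strategy $\sigma$ for ONE, define inductively the $k$-cover $\sigma_s$ at each node $s$ of the tree of possible TWO-histories, enumerate the countably many resulting $k$-covers into a single sequence, apply $S_1(\mathcal{K},\Gamma)$ to produce a $\gamma$-cover drawing one element per node, and extract a branch whose TWO-plays form a $\gamma$-cover by exploiting the eventual-containment property together with a K\"onig-type choice through the tree.

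The main obstacle I expect is the descent in the $(1) \Rightarrow (2)$ argument: producing a bona fide element of each original $U_n$, rather than a finite intersection of such elements, whose resulting sequence is a $\gamma$-cover of $X$. This is the only place where the distinction between $S_{fin}$ and $S_1$ really bites, and it is responsible for the equivalence $(1) \Leftrightarrow (2)$ being a genuine theorem rather than a tautology. Once $(1) \Rightarrow (2)$ is secured, the tree argument for $(2) \Rightarrow (3)$ is technical but essentially templated by the proof of Theorem \ref{th1}.
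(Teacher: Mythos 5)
First, a point of reference: the paper does not prove this statement at all --- it is quoted verbatim as Theorem 10 of \cite{mkm} and used as a black box --- so your proposal can only be measured against the argument in that source. Your architecture (the cycle $(3)\Rightarrow(2)\Rightarrow(1)\Rightarrow(3)$, with the content concentrated in $S_{fin}\Rightarrow S_1$ and in a tree argument for the game) is the standard one; the two easy implications are correct, and your outline of $(2)\Rightarrow(3)$, modelled on the $(4)\Rightarrow(6)$ tree argument of Section~3, is essentially right --- for the target class $\Gamma$ the branch extraction is justified not by a K\"onig-type argument but simply by the fact that every infinite subset of a $\gamma$-cover is again a $\gamma$-cover.

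The genuine gap is in $(1)\Rightarrow(2)$. From $\bigcup_n F_n\in\Gamma$ you conclude that ``for any finite $K$ there is $N$ such that every element of $\bigcup_{n\ge N}F_n$ contains $K$.'' This does not follow. Being a $\gamma$-cover is a property of the \emph{set} $\bigcup_n F_n$: only finitely many of its \emph{elements} omit $K$, but a single exceptional element $U$ may belong to $F_n$ for infinitely many $n$. Nothing in $S_{fin}$ prevents the selection from returning, say, $F_n=\{U,V_n\}$ for all $n$, where $K\not\subseteq U$ while $\{U\}\cup\{V_n:n\in\mathbb{N}\}$ is a perfectly good $\gamma$-cover; then $W_n=\bigcap F_n\subseteq U$ omits $K$ for \emph{every} $n$, and $(W_n)$ is not a $\gamma$-cover. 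The block-by-block ``descent'' that follows rests on this broken step and is in any case too vague to check. The correct engine --- which also makes the intersection closure unnecessary --- is again the infinite-subset lemma: apply $S_{fin}(\mathcal{K},\Gamma)$ to the covers $\mathcal{U}_n$ themselves, obtain finite $F_n\subseteq\mathcal{U}_n$ with $\mathcal{F}=\bigcup_n F_n\in\Gamma$, and choose $V_n\in F_n$ so that infinitely many of the chosen sets are pairwise distinct (possible since $\mathcal{F}$ is infinite while each $F_n$ is finite); then $\{V_n:n\in\mathbb{N}\}$ is an infinite subset of $\mathcal{F}$, hence a $\gamma$-cover, and $V_n\in\mathcal{U}_n$. (Rounds with $F_n=\emptyset$ still require a separate device --- this is where a refinement trick legitimately enters in \cite{mkm} --- but the transfer of the $\gamma$-cover property must go through passing to an infinite subset, not through intersecting the members of each $F_n$.)
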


\begin{theorem}\label{th1} For a space $X$ with $iw(X)=\aleph_0$ the following statements are equivalent:

\begin{enumerate}

\item  $(C(X), \tau_k, \tau_p)$ has the property
$S_{1}(\mathcal{D}^k,\mathcal{S}^p)$;

\item  $(C(X), \tau_k, \tau_p)$ has the property
${\mathcal{D}^k\choose\mathcal{S}^p}$;

\item  $X$ has the property $S_{1}(\mathcal{K},\Gamma)$;

\item $(C(X), \tau_k, \tau_p)$  has the property
$S_{fin}(\mathcal{D}^k,\mathcal{S}^p)$;

\item  $X$ has the property $S_{fin}(\mathcal{K},\Gamma)$;

\item Each finite power of $X$ has the property
$S_{1}(\mathcal{K},\Gamma)$;

\item $(C(X), \tau_k, \tau_p)$ has the property
$S_{1}(\Omega^k_{\bf 0},\Gamma^p_{\bf 0})$;

\item $(C(X), \tau_k, \tau_p)$ has the property
$S_{1}(\mathcal{D}^k,\Gamma^p_{\bf 0})$;

\item  $X$ has the property ${\mathcal{K}\choose\Gamma}$;

\item  ONE has no winning strategy in the game
$G_1(\mathcal{K},\Gamma)$;

\item  ONE has no winning strategy in the game
$G_1(\mathcal{D}^k,\mathcal{S}^p)$;

\item  ONE has no winning strategy in the game $G_1(\Omega^k_{\bf
0},\Gamma^p_{\bf 0})$;

\item  ONE has no winning strategy in the game
$G_1(\mathcal{D}^k,\Gamma^p_{\bf 0})$.

\end{enumerate}

\end{theorem}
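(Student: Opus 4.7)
The strategy is to reduce the thirteen statements to the already-known equivalences on the cover side and then bolt on the function-space and game statements using the techniques of the Theorem 3.3 proof above. Specifically, the cited Theorem 11 in \cite{cmkm} gives $(3) \Leftrightarrow (7)$, and the cited Theorem 10 in \cite{mkm} gives $(3) \Leftrightarrow (5) \Leftrightarrow (10)$. The equivalence with $(6)$ is the classical preservation of $S_1(\mathcal{K},\Gamma)$ under finite powers (the $k$-cover analogue of Gerlits--Nagy), and $(9)$ falls in because $(5)$ applied to the constant sequence $\mathcal{U}_n=\mathcal{U}$ yields a countable $\gamma$-cover inside $\mathcal{U}$, while conversely $(9)$ coupled with a diagonal extraction across countable $\gamma$-subcovers of each $\mathcal{U}_n$ recovers $(5)$. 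At this stage $\{(3),(5),(6),(7),(9),(10)\}$ is a single equivalence class.

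Next I would weave in the function-space statements $(1), (2), (4), (8)$ by mirroring the proof of Theorem 3.3. For $(1) \Rightarrow (3)$, given $k$-covers $(\mathcal{U}^k_i)_i$ and a countable dense $D=\{f_s\}$ in $C_k(X)$ (existing by Noble's theorem since $iw(X)=\aleph_0$), form the dense sets $P_i = \{h^i_{L,W,f_s} : L \in \mathbb{K}(X),\ L \subset W \in \mathcal{U}^k_i,\ h \upharpoonright L = f_s \upharpoonright L,\ h \upharpoonright (X \setminus W) = 1\}$, apply $(1)$ to obtain $p_i \in P_i$ with $\{p_i\}$ sequentially dense in $C_p(X)$, and observe that the $\tau_p$-convergence of a subsequence to $\mathbf{0}$ over each finite $M \subset X$ forces the associated $W_i$'s to form a $\gamma$-cover. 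For $(7) \Rightarrow (1)$ (and similarly for $(8)$), I would use the countable dense set $\{d_i\}$ in $C_k(X)$ to chop a given countable family of dense sets $\{D_{i,j}\}$ into parts indexed by $i$, then invoke $(7)/(8)$ translated around each $d_i$ (using translation continuity in $\tau_k$), exactly as in the $(2) \Rightarrow (1)$ step of Theorem 3.3. The implications $(1) \Rightarrow (2)$, $(1) \Rightarrow (4)$, $(4) \Rightarrow (5)$ (via the bump trick with finite selections), and $(7) \Rightarrow (8) \Rightarrow (1)$ are then immediate from the inclusions $\Omega^k_{\bf 0} \subseteq \mathcal{D}^k$ and $\Gamma^p_{\bf 0} \subseteq \mathcal{S}^p$ together with the coincidence of $S_1$ and $S_{fin}$ for $(\mathcal{K},\Gamma)$.

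For the game statements $(11), (12), (13)$, I would transfer strategies exactly as in the $(4) \Rightarrow (6)$ argument of Theorem 3.3: unfold ONE's strategy $\sigma$ into a tree of moves indexed by finite sequences, apply the relevant selection principle (now $S_1(\mathcal{K},\Gamma)$ or one of its function-space reformulations) at each node, and paste the selections along a suitable branch into a winning reply by TWO. Since the game--selection equivalence $G_1(\mathcal{K},\Gamma) \Leftrightarrow S_1(\mathcal{K},\Gamma)$ is already supplied by the cited Theorem 10 in \cite{mkm}, no independent cover-side game analysis is needed. The main obstacle throughout is the careful bookkeeping required in the bump-function step to pass from sequential convergence to $\mathbf{0}$ in $\tau_p$ to a genuine $\gamma$-cover (not a mere $\omega$-cover); this is where one must fix the parameter $\varepsilon$ governing $W = h^{-1}(-\varepsilon, \varepsilon)$ so that the chosen $p_i$ convergent to $\mathbf{0}$ force cofinitely many $W_i$ to contain each prescribed finite set $M$, which is the defining property of $\Gamma$ as opposed to $\Omega$.
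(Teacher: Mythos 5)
Your overall architecture matches the paper's: anchor the cover-side statements $(3),(5),(6),(7),(9),(10)$ on the cited results, transfer to the function-space statements by bump-function constructions, and handle the games by unfolding ONE's strategy into a tree. But there are concrete gaps. The most serious is your claim that $(8)\Rightarrow(1)$ is ``immediate from the inclusion $\Gamma^p_{\bf 0}\subseteq\mathcal{S}^p$'': that inclusion is false. An element of $\Gamma^p_{\bf 0}$ is a set converging to $\bf 0$, whose sequential closure is essentially itself together with $\bf 0$; it is nowhere near a sequentially dense subset of $C_p(X)$. (The true inclusions run the other way: $\mathcal{D}^k\subseteq\Omega^k_{\bf 0}$, which is what makes $(7)\Rightarrow(8)$ immediate, and $\mathcal{S}^p\subseteq\mathcal{D}^p$.) Since $(8)\Rightarrow(1)$ is your only exit from $(8)$, statements $(8)$ and $(13)$ are left dangling in your scheme. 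The paper instead proves $(8)\Rightarrow(3)$ by a genuine construction (dense sets of functions agreeing with $d_j$ on a compact $K$ and equal to $1$ off $U\in\mathcal{U}_i$, so that a selection lying in $\Gamma^p_{\bf 0}$ forces the chosen $U(i)$ to form a $\gamma$-cover), and separately proves $(3)\Rightarrow(1)$ by a double-indexed family of $k$-covers $\mathcal{U}_{i,j}$ built from a countable dense set.

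Two further steps would fail as written. In your direct $(1)\Rightarrow(3)$, sequential density of $\{p_i\}$ only yields a subsequence $p_{i_k}\to{\bf 0}$, hence a $\gamma$-cover $\{W_{i_k}\}$ selected from only some of the covers $\mathcal{U}^k_i$; that is weaker than $S_1(\mathcal{K},\Gamma)$, and repairing it requires the standard refinement by finite intersections across the covers. The paper sidesteps this by aiming only at $(4)\Rightarrow(2)\Rightarrow(9)$, where a convergent subsequence suffices, and letting the cited equivalence $(9)\Leftrightarrow(3)$ (Theorem 14 of \cite{cmkm}) do the rest. Likewise your ``diagonal extraction'' for $(9)\Rightarrow(5)$ does not work: choosing one member from each of countably many $\gamma$-subcovers can easily fail to be a $\gamma$-cover (one may pick the same small set every time); the correct argument is again the Gerlits--Nagy-style intersection trick, which is precisely the content of the result the paper cites rather than reproves. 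Finally, your translation argument for $(7)\Rightarrow(1)$ produces a set $M$ with $D\subseteq[M]_{seq}$, but sequential closure is not idempotent, so $M\in\mathcal{S}^p$ does not follow without the additional $\gamma$-cover bookkeeping that the paper's $(3)\Rightarrow(1)$ carries out explicitly.
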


\begin{proof}

By Theorem \ref{th11} (Theorem 15 in \cite{cmkm}),
$(3)\Leftrightarrow(7)$.

By Theorem \ref{th14} (Theorem 10 in \cite{mkm}),
$(3)\Leftrightarrow(5)\Leftrightarrow(10)$.

By Theorem 14 in \cite{cmkm}, $(3)\Leftrightarrow(9)$.

$(3)\Leftrightarrow(6)$ (Proposition 13 and Theorem 10 in
\cite{mkm}).

$(1)\Rightarrow(4)$ is immediate.

$(7)\Rightarrow(8)$ is immediate.

Similarly to $(3)\Leftrightarrow(10)$ (the implication
$(2)\Rightarrow(3)$ in Theorem 10 in \cite{mkm}) we have that
$(1)\Leftrightarrow(11)$, $(7)\Leftrightarrow(12)$ and
$(8)\Leftrightarrow(13)$.

$(4)\Rightarrow(2)$. Let $D$ be a dense subset of $C_k(X)$. By the
property $S_{fin}(\mathcal{D}^k,\mathcal{S}^p)$, for
 sequence $(D_i : D_i=D$ and $i\in \mathbb{N} )$
there is a sequence $(K_{i}: i\in\mathbb{N})$ such that for each
$i$, $K_{i}$ is finite, $K_{i}\subset D_{i}$, and
$\bigcup_{i\in\mathbb{N}}K_{i}$ is a countable sequentially dense
subset of $C_p(X)$.

$(2)\Rightarrow(9)$. Let $\mathcal{U}$ be an open $k$-cover of
$X$. Note that the set $\mathcal{D} :=\{ f\in C(X) :
f\upharpoonright (X\setminus U)\equiv 1$ for some
$U\in\mathcal{U}\}$ is dense in $C_k(X)$ and,  hence,
$\mathcal{D}$ contains a countable sequentially dense set $A$ in
$C_p(X)$ . Take $\{f_n: n\in \mathbb{N}\}\subset A$ such that
$f_n\mapsto \bf{0}$ ($n\mapsto \infty$) in $C_p(X)$. Let
$f_n\upharpoonright (X\setminus U_n)\equiv 1$ for some
$U_n\in\mathcal{U}$. Then $\{U_n: n\in \mathbb{N}\}$ is a
$\gamma$-subcover of $\mathcal{U}$, because of $f_n\mapsto
\bf{0}$. Hence, $X$ satisfies ${\mathcal{K}\choose\Gamma}$.

$(3)\Rightarrow(1)$. Let $(D_{i,j} : i,j\in \mathbb{N})$ be a
sequence of dense subsets of $C_k(X)$ and let $D=\{f_i : i\in
\mathbb{N}\}$ be a countable dense subset of $C_k(X)$.


For every $f_i\in D$ and $j\in \mathbb{N}$ consider
$\mathcal{U}_{i,j}=\{U_{h,i,j} :
U_{h,i,j}=(f_i-h)^{-1}(-\frac{1}{j},\frac{1}{j})\wedge
(U_{h,i,j}\neq \emptyset)$ for $h\in D_{i,j}\}$. Note that
$\mathcal{U}_{i,j}$ is an $k$-cover of $X$ for every $i,j\in
\mathbb{N}$. Since $X$ satisfies $S_{1}(\mathcal{K},\Gamma)$,
there is a sequence $(U_{h(i,j),i,j} : i,j\in \mathbb{N})$ such
that $U_{h(i,j),i,j}\in \mathcal{U}_{i,j}$, and
$\phi:=\{U_{h(i,j),i,j}: i,j\in\mathbb{N} \}$ is an element of
$\Gamma$.

We claim that $\{h(i,j) : i,j\in\mathbb{N}\}$ is a sequentially
dense subset of $C_p(X)$.

Fix $g\in C(X)$. There exists $(f_{i_k}: k\in \mathbb{N})$ such
that $f_{i_k}\rightarrow g$ $(k\rightarrow \infty)$ in $\tau_p$.
Then $(g-f_{i_k}) \rightarrow {\bf 0}$ in $\tau_p$. Show that $h
(i_k,j) \rightarrow g$ in $\tau_p$. Let $W=\langle g, A, \epsilon
\rangle$ be a base neighborhood of $g$ in $C_p(X)$, where $A$ is a
finite subset of $X$ and $\epsilon>0$. Since $\phi$ is a
$\gamma$-cover of $X$, then $\{U_{h(i_k,j),i_k,j} : k,j\in
\mathbb{N}\}$ is a $\gamma$-cover of $X$, too. There exists
$k',j'$ such that $\frac{1}{j'} < \frac{\epsilon}{2}$ and for
every $k>k', j>j'$ the following statements are true:
$(g-f_{i_k})(A) \subset (-\frac{\epsilon}{2};\frac{\epsilon}{2})$
and $(f_{i_k}-h(i_k,j))(A) \subset (-\frac{1}{j'};\frac{1}{j'})
\subset (-\frac{\epsilon}{2};\frac{\epsilon}{2})$. Notice, that
$((g-f_{i_k})+(f_{i_k}-h(i_k,j)))(A) = (g-h(i_k,j))(A) \subset
(-\epsilon;\epsilon)$. Then $h(i_k,j)\subset W$ for every $k>k',
j>j'$.

$(8)\Rightarrow(3)$. Let $\{\mathcal{U}_i: i\in
\mathbb{N}\}\subset \mathcal{K}$ and let $D=\{d_j: j\in
\mathbb{N}\}$ be a countable dense subset of $C_k(X)$. Consider
$D_i=\{f_{K,U,i,j}\in C(X):$ such that $f_{K,U,i,j}\upharpoonright
K\equiv d_j$, $f_{K,U,i,j}\upharpoonright (X\setminus U)\equiv 1$
where $K\in \mathbb{K}(X)$, $K\subset U\in\mathcal{U}_i\}$ for
every $i\in \mathbb{N}$. Since $D$ is a dense subset of $C_k(X)$,
then $D_i$ is a dense subset of $C_k(X)$ for every $i\in
\mathbb{N}$. By (8), there is a set $\{f_{K(i),U(i),i,j(i)} : i\in
\mathbb{N}\}$ such that $f_{K(i),U(i),i,j(i)}\in D_i$ and
$\{f_{K(i), U(i), i, j(i)} : i\in \mathbb{N}\}\in \Gamma^p_{\bf
0}$. Claim that a set $\{U(i) : i\in \mathbb{N}\}\in \Gamma$. Let
$K$ be a finite subset of $X$ and let
$W=[K,(-\frac{1}{2},\frac{1}{2})]$ be a base neighborhood of ${\bf
0}$. Since $\{f_{K(i), U(i), i, j(i)} : i\in \mathbb{N}\}\in
\Gamma^p_{\bf 0}$, there is $i'\in \mathbb{N}$ such that $f_{K(i),
U(i), i, j(i)}\in W$ for every $i>i'$. It follows that $K\subset
U(i)$ for every $i>i'$ and hence $\{U(i) : i\in \mathbb{N}\}\in
\Gamma$.

\end{proof}

We can summarize the relationships between considered notions in
next diagrams.

\begin{center}

$G_1(\mathcal{D}^k,\Gamma^p_{\bf 0}) \Leftrightarrow
G_{fin}(\mathcal{D}^k,\Gamma^p_{\bf 0}) \Rightarrow
G_1(\mathcal{D}^k,\Omega^p_0) \Rightarrow
G_{fin}(\mathcal{D}^k,\Omega^p_0)$ \\  \, \, $\Updownarrow$ \, \,
\, \, \,\, \,  \, \, \, $\Updownarrow$ \,\, \, \, \,\, \, \,
\, \, \, $\Updownarrow$ \,\, \, \, \,\, \, \,\, \, \, $\Updownarrow$ \\
$G_1(\Omega^k_0,\Gamma^p_{\bf 0}) \Leftrightarrow
G_{fin}(\Omega^k_0,\Gamma^p_{\bf 0}) \Rightarrow
G_1(\Omega^k_0,\Omega^p_0) \Rightarrow
G_{fin}(\Omega^k_0,\Omega^p_0)$ \\  \, \, $\Updownarrow$ \, \, \,
\, \,\, \,  \, \, \, $\Updownarrow$ \,\, \, \, \,\, \, \,
\, \, \, $\Updownarrow$ \,\, \, \, \,\, \, \,\, \, \, $\Updownarrow$ \\
$G_1(\mathcal{D}^k,\mathcal{S}^p) \Leftrightarrow
G_{fin}(\mathcal{D}^k,\mathcal{S}^p) \Rightarrow
G_1(\mathcal{D}^k,\mathcal{D}^p) \Rightarrow
G_{fin}(\mathcal{D}^k,\mathcal{D}^p)$ \\  \, \, $\Updownarrow$ \,
\, \, \, \,\, \,  \, \, \, $\Updownarrow$ \,\, \, \, \,\, \, \,
\, \, \, $\Updownarrow$ \,\, \, \, \,\, \, \,\, \, \, $\Updownarrow$ \\
$S_1(\mathcal{D}^k,\mathcal{S}^p) \Leftrightarrow
S_{fin}(\mathcal{D}^k,\mathcal{S}^p) \Rightarrow
S_1(\mathcal{D}^k,\mathcal{D}^p) \Rightarrow
S_{fin}(\mathcal{D}^k,\mathcal{D}^p)$ \\  \, \, $\Updownarrow$ \,
\, \, \, \,\, \,  \, \, \, $\Updownarrow$ \,\, \, \, \,\, \, \,
\, \, \, $\Updownarrow$ \,\, \, \, \,\, \, \,\, \, \, $\Updownarrow$ \\
$S_1(\mathcal{D}^k,\Gamma^p_{\bf 0}) \Leftrightarrow
S_{fin}(\mathcal{D}^k,\Gamma^p_{\bf 0}) \Rightarrow
S_1(\mathcal{D}^k,\Omega^p_0) \Rightarrow
S_{fin}(\mathcal{D}^k,\Omega^p_0)$ \\  \, \, $\Updownarrow$ \, \,
\, \, \,\, \,  \, \, \, $\Updownarrow$ \,\, \, \, \,\, \, \,
\, \, \, $\Updownarrow$ \,\, \, \, \,\, \, \,\, \, \, $\Updownarrow$ \\
$S_1(\Omega^k_0,\Gamma^p_{\bf 0}) \Leftrightarrow
S_{fin}(\Omega^k_0,\Gamma^p_{\bf 0}) \Rightarrow
S_1(\Omega^k_0,\Omega^p_0) \Rightarrow
S_{fin}(\Omega^k_0,\Omega^p_0)$

\medskip

Fig.~2. The Diagram of games and selectors of $(C(X), \tau_k,
\tau_p)$.

\end{center}

\medskip
\begin{center}
$G_1(\mathcal{K},\Gamma) \Leftrightarrow
G_{fin}(\mathcal{K},\Gamma) \Rightarrow G_1(\mathcal{K},\Omega)
\Rightarrow G_{fin}(\mathcal{K},\Omega)$ \\  \, \, $\Updownarrow$
\, \, \, \, \,\, \,  \, \, \, $\Updownarrow$ \,\, \, \, \,\, \, \,
\, \, \, $\Updownarrow$ \,\, \, \, \,\, \, \,\, \, \, $\Updownarrow$ \\
$S_1(\mathcal{K},\Gamma) \Leftrightarrow
S_{fin}(\mathcal{K},\Gamma) \Rightarrow S_1(\mathcal{K},\Omega)
\Rightarrow S_{fin}(\mathcal{K},\Omega)$

\medskip

Fig.~3. The Diagram of games and selection principles for a space
$X$ with $iw(X)=\aleph_0$ corresponding to selectors of $(C(X),
\tau_k, \tau_p)$.

\end{center}

\bigskip

\bibliographystyle{model1a-num-names}
\bibliography{<your-bib-database>}







\end{document}